\newtheorem{theorem}{Theorem}[section]
\newtheorem*{theorem*}{Theorem}
\newtheorem{lemma}[theorem]{Lemma}
\newtheorem{remark}[theorem]{Remark}
\numberwithin{equation}{section}
\begin{document}
\title {A new elementary proof of the formula $\sum\limits_{n=1}^{\infty}\frac{1}{n^2}=\frac{\pi^2}{6}$}

\author{Jia Li}
\date{}
%\address{}
%\email{jialimath001@pku.org.cn}
%\subjclass[2020]{11J72 (primary), 11M06, 33C20 (secondary)}
%\keywords{}

\maketitle

%\begin{abstract}
%In this article, we provide a new elementary proof of the formula $\sum\limits_{n=1}^{\infty}\frac{1}{n^2}=\frac{\pi^2}{6}$.
%\end{abstract}

\section{Introduction}

In the history of mathematics, the Basel problem was initially resolved by Euler. To date, dozens of different proofs have been developed. The most elementary approach was presented by Papadimitriou in \cite{PI1973}. Later, Apostol \cite{AT1973} extended this method to evaluate $\zeta(2n)$. Although our approach is fundamentally based on that of Papadimitriou, we reformulate the key identities using tools from linear algebra, thereby offering a self-contained derivation.

\section{Some useful lemmas}

\begin{lemma}\label{A_n}
    Let 
$$A_n:=\left[\begin{matrix}
n&n-1&n-2&\cdots&2&1\\
n-1&n-1&n-2&\cdots&2&1\\
n-2&n-2&n-2&\cdots&2&1\\
\vdots&\vdots&\vdots&\ddots&\vdots&\vdots\\
2&2&2&\cdots&2&1\\
1&1&1&\cdots&1&1
\end{matrix}\right]$$
be a $n\times n$ matrix, then the eigenvalues of the matrix $A_n$ are
$$\lambda_k=\frac{1}{4\sin^2\left(\frac{2k-1}{2n+1}\cdot\frac{\pi}{2}\right)}, k=1,2,\cdots,n.$$
\end{lemma}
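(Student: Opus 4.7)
My plan is to avoid attacking the characteristic polynomial of $A_n$ directly and instead exploit two structural features: $A_n$ is conjugate, via the reversal permutation, to the classical $\min$-matrix, whose inverse is a tridiagonal discrete Laplacian with known sinusoidal eigenvectors.

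\emph{Step 1: reduction to a $\min$-matrix.} Let $J$ be the $n\times n$ reversal matrix, $J_{ij}=\delta_{i,n+1-j}$, so $J^2=I$. A direct inspection of the entries gives $A_n[i,j]=n+1-\max(i,j)$, hence
\[
(JA_nJ)[i,j]=A_n[n+1-i,n+1-j]=n+1-\max(n+1-i,n+1-j)=\min(i,j).
\]
Thus $A_n$ is similar to the $\min$-matrix $B_n$ with $B_n[i,j]=\min(i,j)$, and it suffices to find the eigenvalues of $B_n$.

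\emph{Step 2: invert $B_n$.} I will verify by direct multiplication that $B_n^{-1}=T_n$, where $T_n$ is the tridiagonal matrix with $2$ on the diagonal except the last entry which is $1$, and $-1$ on the super- and sub-diagonals. Since $B_n$ is positive definite, its eigenvalues are the reciprocals of those of $T_n$, so the claim reduces to showing
\[
\operatorname{spec}(T_n)=\Bigl\{\,4\sin^2\!\Bigl(\tfrac{(2k-1)\pi}{2(2n+1)}\Bigr):k=1,\dots,n\Bigr\}.
\]

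\emph{Step 3: diagonalize $T_n$ by a sinusoidal ansatz.} Try $v(j)=\sin(\alpha j)$ for $j=1,\dots,n$. For the interior rows $2\le j\le n-1$, the standard identity
\[
2\sin(\alpha j)-\sin(\alpha(j-1))-\sin(\alpha(j+1))=4\sin^2\!\bigl(\tfrac{\alpha}{2}\bigr)\sin(\alpha j)
\]
yields the candidate eigenvalue $4\sin^2(\alpha/2)$. The first row $j=1$ is automatic because the convention $\sin(0)=0$ matches the absent $v(0)$ term. The boundary row $j=n$ (whose diagonal entry is $1$, not $2$) imposes the quantization condition
\[
\sin(\alpha n)-\sin(\alpha(n-1))=4\sin^2\!\bigl(\tfrac{\alpha}{2}\bigr)\sin(\alpha n),
\]
which after the substitutions $\sin\alpha=2\sin(\alpha/2)\cos(\alpha/2)$ and $1-\cos\alpha=2\sin^2(\alpha/2)$ collapses to $\cos\!\bigl(\tfrac{(2n+1)\alpha}{2}\bigr)=0$. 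Hence $\alpha=\alpha_k:=\frac{(2k-1)\pi}{2n+1}$ for $k=1,\dots,n$, producing $n$ linearly independent eigenvectors with eigenvalues $4\sin^2(\alpha_k/2)$, and therefore all of $\operatorname{spec}(T_n)$.

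\emph{Expected main obstacle.} The book-keeping for the interior rows is routine once the identity above is invoked, and the $j=1$ row is free. The substantive step is the boundary row at $j=n$: the non-standard diagonal entry $1$ is exactly what shifts the denominator from $2n$ to $2n+1$ in the quantization, and verifying cleanly that this reduces to $\cos((2n+1)\alpha/2)=0$ (rather than to a condition whose zeros would give a different indexing) is where I will need to be most careful. Inverting the eigenvalues and transporting them back through the similarity of Step~1 then yields the stated formula.
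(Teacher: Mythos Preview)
Your proposal is correct and shares the paper's overall strategy---pass to the inverse, which is a tridiagonal perturbation of the discrete Laplacian, and compute its spectrum---but the two executions diverge at that last step. The paper inverts $A_n$ directly (obtaining your $T_n$ with the rows and columns reversed, i.e.\ the exceptional diagonal entry $1$ sits in the top-left corner), then expands $\det(4\sin^2\theta\,I_n-A_n^{-1})$ by the three-term recurrence for tridiagonal determinants and closes an induction to the closed form $(-1)^n\cos((2n+1)\theta)/\cos\theta$, whose zeros give the eigenvalues. You instead conjugate first to the $\min$-matrix, invert, and then exhibit explicit sinusoidal eigenvectors $v(j)=\sin(\alpha j)$; the modified boundary row forces the same quantization $\cos\bigl((2n+1)\alpha/2\bigr)=0$. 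Your route hands you the eigenvectors for free and sidesteps the determinant induction; the paper's route produces the full characteristic polynomial, which is what lets it read off higher power sums of the eigenvalues (as in its Remark on $\sum\cot^4$). Both arguments are standard for Jacobi matrices of this type and are of comparable length and elementarity.
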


\begin{proof}
     Let $B_n:=A_n^{-1}$, a direct calculation shows that
$$B_n=\left[\begin{matrix}
1&-1&0&\cdots&0&0\\
-1&2&-1&\cdots&0&0\\
0&-1&2&\cdots&0&0\\
\vdots&\vdots&\vdots&\ddots&\vdots&\vdots\\
0&0&0&\cdots&2&-1\\
0&0&0&\cdots&-1&2
\end{matrix}\right]$$
we consider the following determinant
\begin{align*}
    D_n(\theta):&=\det(4\sin^2(\theta)I_n-B_n)\\
    &=\left|\begin{matrix}
4\sin^2(\theta)-1&1&0&\cdots&0&0\\
1&4\sin^2(\theta)-2&1&\cdots&0&0\\
0&1&4\sin^2(\theta)-2&\cdots&0&0\\
\vdots&\vdots&\vdots&\ddots&\vdots&\vdots\\
0&0&0&\cdots&4\sin^2(\theta)-2&1\\
0&0&0&\cdots&1&4\sin^2(\theta)-2
\end{matrix}\right|
\end{align*}
this leads to the following recurrence relation:
$$\begin{cases}
D_n(\theta)=(4\sin^2(\theta)-2)D_{n-1}(\theta)-D_{n-2}(\theta),n\geqslant 3\\ 
D_1(\theta)=1-2\cos(2\theta)\\
D_2(\theta)=1-2\cos(2\theta)+2\cos(4\theta)
\end{cases}$$
it follows by induction that
$$D_n(\theta)=1-2\cos(2\theta)+2\cos(2\theta)+\cdots+(-1)^n2\cos(2n\theta)=(-1)^n\frac{\cos((2n+1)\theta)}{\cos(\theta)}$$
hence, we obtain
$$D_n\left(\frac{2k-1}{2n+1}\cdot\frac{\pi}{2}\right)=0, k=1,2,\cdots,n.$$
Since $4\sin^2\left(\frac{2k-1}{2n+1}\cdot\frac{\pi}{2}\right)\neq4\sin^2\left(\frac{2l-1}{2n+1}\cdot\frac{\pi}{2}\right)(k\neq l)$, we obtain $n$ different eigenvalues of $B_n$. Hence, the eigenvalues of matrix $B_n$ are
$$4\sin^2\left(\frac{2k-1}{2n+1}\cdot\frac{\pi}{2}\right), k=1,2,\cdots,n.$$
Then, the eigenvalues of matrix $A_n$ are
$$\lambda_k=\frac{1}{4\sin^2\left(\frac{2k-1}{2n+1}\cdot\frac{\pi}{2}\right)}, k=1,2,\cdots,n.$$
\end{proof}

\begin{lemma}\label{sumlambda}
    We have the following identity
$$\sum_{k=1}^n\cot^2\left(\frac{2k-1}{2n+1}\cdot\frac{\pi}{2}\right)=2n^2+n$$
\end{lemma}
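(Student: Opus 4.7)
The plan is to read off the identity as a trace computation applied to Lemma~\ref{A_n}. Since trace equals sum of eigenvalues, I would first compute
\[
\operatorname{tr}(A_n)=n+(n-1)+(n-2)+\cdots+2+1=\frac{n(n+1)}{2},
\]
simply by reading off the diagonal of $A_n$. On the other hand, Lemma~\ref{A_n} identifies the eigenvalues explicitly, so
\[
\operatorname{tr}(A_n)=\sum_{k=1}^{n}\lambda_k=\sum_{k=1}^{n}\frac{1}{4\sin^{2}\!\bigl(\frac{2k-1}{2n+1}\cdot\frac{\pi}{2}\bigr)}.
\]

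Next I would convert the cosecant-squared sum into the desired cotangent-squared sum using the elementary trigonometric identity $\cot^{2}\theta=\csc^{2}\theta-1$. Applying this termwise gives
\[
\sum_{k=1}^{n}\cot^{2}\!\Bigl(\tfrac{2k-1}{2n+1}\cdot\tfrac{\pi}{2}\Bigr)
=\sum_{k=1}^{n}\frac{1}{\sin^{2}\!\bigl(\frac{2k-1}{2n+1}\cdot\frac{\pi}{2}\bigr)}-n
=4\operatorname{tr}(A_n)-n.
\]
Substituting $\operatorname{tr}(A_n)=\tfrac{n(n+1)}{2}$ yields $2n(n+1)-n=2n^{2}+n$, which is exactly the claimed identity.

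There is essentially no obstacle: Lemma~\ref{A_n} has done all the heavy lifting by pinning down the spectrum, and the trace of $A_n$ is transparent from its definition. The only thing that needs a moment's thought is the passage from $\csc^2$ to $\cot^2$, which costs the extra $-n$ accounting for the $n$ subtracted ones. Thus the proof reduces to a one-line trace computation plus a trigonometric bookkeeping step, and I would write it up in exactly that order: trace of $A_n$, invoke Lemma~\ref{A_n}, apply $\cot^{2}=\csc^{2}-1$, simplify.
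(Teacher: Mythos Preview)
Your proposal is correct and matches the paper's own proof essentially line for line: invoke Lemma~\ref{A_n}, equate the sum of eigenvalues with $\operatorname{tr}(A_n)$, and use $\cot^{2}\theta=\csc^{2}\theta-1$ to get $4\operatorname{tr}(A_n)-n=2n^{2}+n$. The only difference is that you spell out $\operatorname{tr}(A_n)=\tfrac{n(n+1)}{2}$ explicitly, whereas the paper leaves that arithmetic implicit.
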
 

\begin{proof}
By lemma \ref{A_n}, we have
\begin{align*}
    \sum_{k=1}^n\cot^2\left(\frac{2k-1}{2n+1}\cdot\frac{\pi}{2}\right)=\sum_{k=1}^n\frac{1}{\sin^2\left(\frac{2k-1}{2n+1}\cdot\frac{\pi}{2}\right)}-n
    =4\cdot\text{tr}(A_n)-n=2n^2+n
\end{align*}
\end{proof}
\begin{remark}\label{cot4}
    The above method can also be used to obtain the following formula
    $$\sum_{k=1}^n\cot^4\left(\frac{2k-1}{2n+1}\cdot\frac{\pi}{2}\right)=\frac{8n^4+16n^3+4n^2-n}{3}$$
\end{remark}

\section{Proof of $\sum\limits_{n=1}^{\infty}\frac{1}{n^2}=\frac{\pi^2}{6}$}
We have well-known inequalities
$$\sin(\theta)<\theta<\tan(\theta), 0<\theta<\frac{\pi}{2}$$
hence, we get the inequality
$$\cot^2(\theta)<\frac{1}{\theta^2}<\cot^2(\theta)+1, 0<\theta<\frac{\pi}{2}$$
then, we have
$$\sum_{k=1}^n\cot^2\left(\frac{2k-1}{2n+1}\cdot\frac{\pi}{2}\right)<\frac{4}{\pi^2}\sum_{k=1}^n\frac{(2n+1)^2}{(2k-1)^2}<\sum_{k=1}^n\cot^2\left(\frac{2k-1}{2n+1}\cdot\frac{\pi}{2}\right)+n$$
by lemma \ref{sumlambda}, we obtain
$$\frac{\pi^2}{4}\cdot\frac{2n^2+n}{(2n+1)^2}<\sum_{k=1}^n\frac{1}{(2k-1)^2}<\frac{\pi^2}{4}\cdot\frac{2n^2+2n}{(2n+1)^2}\Longrightarrow\sum_{n=1}^{\infty}\frac{1}{(2n-1)^2}=\frac{\pi^2}{8}$$
hence, we have
$$\sum_{n=1}^{\infty}\frac{1}{n^2}=\frac{4}{3}\sum_{n=1}^{\infty}\frac{1}{(2n-1)^2}=\frac{4}{3}\cdot\frac{\pi^2}{8}=\frac{\pi^2}{6}.$$
Similarly, by using remark \ref{cot4}, we also have
$$\sum_{k=1}^{\infty}\frac{1}{n^4}=\frac{\pi^4}{90}.$$

\begin{flushright}
\begin{minipage}{148mm}\sc\footnotesize
J.\,L.: School of Mathematical Sciences, \\ Peking University, Beijing, China \\
{\it E-mail address}: \href{mailto:jialimath001@pku.org.cn}{{\tt jialimath001@pku.org.cn}} \vspace*{3mm}
\end{minipage}
\end{flushright}

\end{document}